\numberwithin{equation}{section}
\DeclareMathOperator{\E}{\mathbb{E}}
\DeclareMathOperator*{\im}{Im}
\DeclareMathOperator*{\re}{Re}
\renewcommand{\Pr}[2][]{\mathbb{P}_{#1} \left\{ #2 \rule{0mm}{3mm}\right\}}
\def \R {\mathbb{R}}
\def \NN {\mathcal{N}}
\def \e {\varepsilon}
\def \d {\delta}
\def \l {\lambda}
\def \s {\sigma}
\def \HS {\mathrm{HS}}
\def \tran {\mathsf{T}}
\def \HS {\mathrm{HS}}
\def \< {\langle}
\def \> {\rangle}
\newtheorem{theorem}{Theorem}[section]
\newtheorem{corollary}[theorem]{Corollary}
\theoremstyle{remark}
\newtheorem{remark}[theorem]{Remark}
\title[]{Hanson-Wright inequality and sub-gaussian concentration}
\author{Mark Rudelson}
\author{Roman Vershynin}
\address{Department of Mathematics, University of Michigan, 530 Church St., Ann Arbor, MI 48109, U.S.A.}
\email{\{rudelson, romanv\}@umich.edu}
\thanks{M. R. was partially supported by NSF grant DMS 1161372. R. V. was partially supported by NSF grant DMS 1001829 and 1265782.}
\date{\today}
\begin{document}

\begin{abstract}
In this expository note, we give a modern proof of Hanson-Wright inequality for quadratic forms in
sub-gaussian random variables.
We deduce a useful concentration inequality for sub-gaussian random vectors.
Two examples are given to illustrate these results: a concentration of
distances between random vectors and subspaces, and a bound on the
norms of products of random and deterministic matrices.
\end{abstract}

\maketitle

\section{Hanson-Wright inequality}
%----------------------

Hanson-Wright inequality is a general concentration result for quadratic forms in
sub-gaussian random variables.
A version of this theorem was first proved in \cite{HW, W}, however with one weak point
mentioned in Remark~\ref{rem: original HW}.
In this article we give a modern proof of Hanson-Wright inequality,
which automatically fixes the original weak point.
We then deduce a useful concentration inequality for sub-gaussian random vectors, and illustrate
it with two applications.

Our arguments use standard tools of high-dimensional probability. The reader unfamiliar with them
may benefit from consulting the tutorial \cite{V RMT}. Still, we will recall the basic notions where possible.
A random variable $\xi$ is called sub-gaussian if its distribution is dominated by that of a normal random variable.
This can be expressed by requiring that $\E \exp(\xi^2/K^2) \le 2$ for some $K>0$; the infimum of
such $K$ is traditionally called the sub-gaussian
 %%%%%%%%%%%%%%% MR%%%%%%%%%%% 
 or $\psi_2$
 %%%%%%%%%%%%%MR%%%%%%%%%%%%%%%%%%%
 norm of $\xi$.
 This turns the set of subgaussian random variables into the Orlicz space with the Orlicz function $\psi_2(t)=  \exp(t^2)-1$.
 %%%%%%%%%%%%%%%%MR%%%%%%%%%%%%%%%%%%%%%%%%%
  A number of other equivalent definitions
are used in the literature. In particular, $\xi$ is sub-gaussian if an only if
$\E|\xi|^p = O(p)^{p/2}$ as $p \to \infty$, so we can redefine the sub-gaussian norm of $\xi$ as
$$
\|\xi\|_{\psi_2} = \sup_{p \ge 1} p^{-1/2} (\E|X|^p)^{1/p}.
$$
One can show that $\|\xi\|_{\psi_2}$  defined this way is within an absolute constant factor
from the infimum of $K>0$ mentioned above,
see \cite[Section 5.2.3]{V RMT}.
One can similarly define sub-exponential random variables, i.e. by requiring that
$\|\xi\|_{\psi_1} = \sup_{p \ge 1} p^{-1} (\E|X|^p)^{1/p} < \infty$.

For an $m \times n$ matrix $A = (a_{ij})$, recall that the operator norm of $A$
is $\|A\| = \max_{x \neq 0} \|Ax\|_2 / \|x\|_2$
and the Hilbert-Schmidt (or Frobenius) norm of $A$
is $\|A\|_\HS = ( \sum_{i,j} |a_{i,j}|^2 )^{1/2}$.
Throughout the paper, $C, C_1, c, c_1, \ldots$ denote positive absolute constants.

\begin{theorem}[Hanson-Wright inequality]					\label{thm: HW}
  Let $X = (X_1,\ldots,X_n) \in \R^n$ be a random vector with independent
  components $X_i$ which satisfy
  $\E X_i = 0$ and $\|X_i\|_{\psi_2} \le K$.
  Let $A$ be an $n \times n$ matrix. Then, for every $t \ge 0$,
  $$
  \Pr{|X^\tran A X - \E X^\tran A X| > t}
  \le 2 \exp \Big[ - c \min \Big( \frac{t^2}{K^4 \|A\|_\HS^2}, \frac{t}{K^2 \|A\|} \Big) \Big].
  $$
\end{theorem}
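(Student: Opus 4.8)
The plan is to normalize $K=1$ by homogeneity (at the end, replacing $X_i$ by $X_i/K$ and the threshold $t$ by $t/K^2$ reinstates all the $K$'s), and then to split the quadratic form into its diagonal and off-diagonal parts, treating them separately. Writing
\[
X^\tran A X - \E X^\tran A X \;=\; \sum_i a_{ii}(X_i^2 - \E X_i^2) \;+\; \sum_{i\ne j} a_{ij} X_i X_j,
\]
I note that the cross terms have zero mean by independence and centering, so only these two genuinely random pieces survive. The diagonal part is a sum of independent centered random variables $a_{ii}(X_i^2 - \E X_i^2)$, and since $X_i$ sub-gaussian forces $X_i^2$ to be sub-exponential, Bernstein's inequality yields a bound of the form $2\exp[-c\min(t^2/\sum_i a_{ii}^2,\, t/\max_i|a_{ii}|)]$; because $\sum_i a_{ii}^2 \le \|A\|_\HS^2$ and $\max_i|a_{ii}| \le \|A\|$, this already matches the target. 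All the difficulty therefore lies in the off-diagonal chaos $S=\sum_{i\ne j}a_{ij}X_iX_j$.

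For $S$ I would estimate the moment generating function $\E\exp(\lambda S)$ and finish by a Chernoff bound. The first move is \emph{decoupling}: introducing an independent copy $X'$ of $X$ and applying the standard decoupling inequality for convex functions of off-diagonal chaos, I replace $\E\exp(\lambda S)$ by $\E\exp\big(4\lambda \sum_{i,j} a_{ij} X_i X_j'\big)$ up to an absolute constant. Conditioning on $X$, the inner sum is a linear form $\sum_j (A^\tran X)_j X_j'$ in the independent centered sub-gaussian variables $X_j'$, so the sub-gaussian bound on its moment generating function gives $\E_{X'}\exp(\cdots)\le \exp(C\lambda^2\|A^\tran X\|_2^2)$. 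Thus $\E\exp(\lambda S)\le \E_X\exp(C\lambda^2\|A^\tran X\|_2^2)$.

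The crux is now to control this expectation of the exponential of a \emph{quadratic} form $\|A^\tran X\|_2^2 = X^\tran A A^\tran X$ in the $X_i$, which at first looks circular. The device I would use to break the circularity is to linearize against an auxiliary standard Gaussian vector $g$: since $\exp(C\lambda^2\|A^\tran X\|_2^2)=\E_g\exp(\langle c\lambda A^\tran X, g\rangle)$ for a suitable constant $c$, Fubini turns the right-hand side into $\E_g\E_X\exp(c\lambda\langle Ag, X\rangle)$, and the inner expectation is once more a sub-gaussian linear form in $X$, bounded by $\exp(C'\lambda^2\|Ag\|_2^2)$. This reduces everything to a \emph{purely Gaussian} chaos $\E_g\exp(C'\lambda^2\|Ag\|_2^2)$, which diagonalizes in the eigenbasis of $A^\tran A$ as $\prod_i(1-2C'\lambda^2 s_i^2)^{-1/2}$, where the $s_i$ are the singular values of $A$. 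Using $(1-x)^{-1/2}\le e^x$ on $[0,1/2]$ and $\sum_i s_i^2 = \|A\|_\HS^2$, this is at most $\exp(C''\lambda^2\|A\|_\HS^2)$ provided $\lambda^2\|A\|^2\le c$. I expect this Gaussian-linearization step to be the main conceptual obstacle, since it is exactly what lets a quadratic form in sub-gaussian variables be compared with a tractable Gaussian one.

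Finally, with the bound $\E\exp(\lambda S)\le \exp(C''\lambda^2\|A\|_\HS^2)$ valid for $|\lambda|\le c/\|A\|$, the Chernoff estimate $\Pr{S>t}\le \exp(-\lambda t + C''\lambda^2\|A\|_\HS^2)$ optimized over the admissible range of $\lambda$ produces the two-regime bound $\exp[-c\min(t^2/\|A\|_\HS^2,\, t/\|A\|)]$: the interior optimum $\lambda\sim t/\|A\|_\HS^2$ gives the sub-gaussian regime, and the boundary $\lambda=c/\|A\|$ gives the sub-exponential tail. Running the same argument for $-S$, combining the diagonal and off-diagonal estimates by a union bound (splitting $t$ into $t/2$ for each), and reinstating $K$ by scaling then completes the proof.
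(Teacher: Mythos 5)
Your proposal is correct and follows the same architecture as the paper's proof: diagonal/off-diagonal split with Bernstein for the diagonal, an MGF bound for the off-diagonal chaos via decoupling, a Gaussian linearization to break the apparent circularity of bounding $\E_X\exp(C\l^2\|A^\tran X\|_2^2)$, the chi-squared computation over the singular values, and a Chernoff bound optimized over $\l\le c/\|A\|$. The one substantive divergence is the decoupling step. You invoke the full decoupling inequality for convex functions of an off-diagonal chaos, replacing $\sum_{i\ne j}a_{ij}X_iX_j$ by $4\sum_{i\ne j}a_{ij}X_iX_j'$ with an independent copy $X'$; this is the de la Pe\~na--Montgomery-Smith route that the paper explicitly notes (in its remark on related results) is used in Latala's proof, and which the authors deliberately avoid. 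The paper instead uses Bourgain's elementary trick: writing $S=4\E_\d S_\d$ with $S_\d=\sum_{i,j}\d_i(1-\d_j)a_{ij}X_iX_j$ for i.i.d.\ Bernoulli $\d_i$ and applying Jensen, so that the conditioning splits the coordinates into a random subset $\Lambda_\d$ and its complement without ever introducing $X'$. Your route is cleaner downstream --- you get $\|A^\tran X\|_2^2$ for the full matrix rather than having to carry the projected matrices $A_\d=P_\d A(I-P_\d)$ through the Gaussian computation --- but it imports the full decoupling theorem as a black box, whereas the paper's version is self-contained (two lines of Jensen). Two minor points to tidy up if you write this in full: the convex decoupling inequality is stated for the off-diagonal sum on both sides, so keep $i\ne j$ in the decoupled chaos rather than writing $\sum_{i,j}$; and when you pass to $\|A_0^\tran X\|_2$ for the diagonal-free matrix $A_0$, note that $\|A_0\|_\HS\le\|A\|_\HS$ and $\|A_0\|\le 2\|A\|$, which only affects absolute constants.
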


\begin{remark} [Related results]    \label{rem: original HW}
One of the aims of this note is to give a simple and self-contained proof of the Hanson--Wright inequality
using only the  standard toolkit of the large deviation theory.
Several partial results and alternative proofs are scattered in the literature.

  Improving upon an earlier result on Hanson-Wright \cite{HW}, Wright \cite{W} established
  a slightly weaker version of Theorem~\ref{thm: HW}.
  Instead of $\|A\| = \|(a_{ij})\|$, both papers had $\|( |a_{ij}| )\|$ in the right side.
  The latter norm can be much larger than the norm of $A$, and it is often less easy to compute.
  This weak point went unnoticed in several later applications of Hanson-Wright inequality,
  however it was clear to experts that it could be fixed.

A proof for the case where  $X_1,\ldots,X_n$ are independent symmetric Bernoulli random variables
appears in the lecture notes of Nelson \cite{N}.
The moment inequality which essentially implies the result of \cite{N} can be also found in \cite{DKN}.
A different approach to Hanson-Wright inequality, due to Rauhut and Tropp,
can be found in \cite[Proposition 8.13]{FR}. It is presented for diagonal-free matrices (however this
assumption can be removed by treating the diagonal separately as is done below), and
for independent symmetric Bernoulli random variables (but the proof can be extended to
sub-gaussian random variables).

An upper bound for $\Pr{X^\tran A X - \E X^\tran A X > t}$, which is equivalent to what appears
in the Hanson--Wright inequality, can be found in \cite{HKZ}. However, the assumptions in \cite{HKZ}
are somewhat different. On the one hand, it is assumed that the matrix $A$ is positive-semidefinite, while in our
result $A$ can be arbitrary. On the other hand, a weaker assumption is placed on the random vector $X = (X_1,\ldots,X_n)$. Instead of assuming that the coordinates of $X$ are independent subgaussian random variables,
it is assumed in \cite{HKZ} that the marginals of $X$ are uniformly subgaussian, i.e., that
$\sup_{y \in S^{n-1}} \ \|\langle X,y\rangle \|_{\psi_2} \le K$.
%Note, however, that our argument can be extended to vectors $X$ satisfying this condition without any changes.

The paper \cite{BM} contains an alternative short proof of Hanson--Wright inequality due to Latala
for diagonal-free matrices.
Like in the proof below, Latala's argument uses decoupling of the order 2 chaos. However, unlike the current paper, which uses a simple decoupling argument of Bourgain \cite{B},  his proof uses a more general and
more difficult decoupling theorem for U-statistics due to de la Pe\~{n}a and Montgomery-Smith \cite{dlPM}. For an extensive discussion of modern decoupling methods see \cite{dlPG}.

Large deviation inequalities for polynomials of higher degree, which extend the Hanson-Wright type inequalities, have been obtained by Latala \cite{L} and recently by Adamczak and Wolff \cite{AW}.
\end{remark}

\begin{proof}[Proof of Theorem~\ref{thm: HW}]
By replacing $X$ with $X/K$ we can assume without loss of generality that $K=1$.
Let us first estimate
$$
p := \Pr{X^\tran A X - \E X^\tran A X > t}.
$$
Let $A = (a_{ij})_{i,j=1}^n$. By independence and zero mean of $X_i$, we can represent
\begin{align*}
X^\tran A X - \E X^\tran A X
&= \sum_{i,j} a_{ij} X_i X_j - \sum_i a_{ii} \E X_i^2 \\
&= \sum_i a_{ii} (X_i^2 - \E X_i^2) + \sum_{i,j:\, i \ne j} a_{ij} X_i X_j.
\end{align*}
The problem reduces to estimating the diagonal and off-diagonal sums:
$$
p \le \Pr{ \sum_i a_{ii} (X_i^2 - \E X_i^2) > t/2 } + \Pr{ \sum_{i,j:\, i \ne j} a_{ij} X_i X_j > t/2 }
=: p_1 + p_2.
$$

\medskip
{\bf Step 1: diagonal sum.} Note that $X_i^2 - \E X_i^2$ are independent mean-zero
sub-exponential random variables, and
$$
\|X_i^2 - \E X_i^2\|_{\psi_1} \le 2 \|X_i^2\|_{\psi_1} \le 4 \|X_i\|_{\psi_2}^2 \le 4K^2.
$$
These standard bounds can be found in \cite[Remark~5.18 and Lemma 5.14]{V RMT}.
Then we can use a Bernstein-type inequality (see \cite[Proposition~5.16]{V RMT}) and obtain
\begin{equation}				\label{eq: pone}
p_1 \le \Big[ - c \min \Big( \frac{t^2}{\sum_i a_{ii}^2}, \frac{t}{\max_i |a_{ii}|} \Big) \Big]
\le \exp \Big[ - c \min \Big( \frac{t^2}{\|A\|_\HS^2}, \frac{t}{\|A\|} \Big) \Big].
\end{equation}

\medskip
{\bf Step 2: decoupling.} It remains to bound the off-diagonal sum
$$
S := \sum_{i,j:\, i \ne j} a_{ij} X_i X_j.
$$
The argument will be based on estimating the moment generating function of $S$ by
decoupling and reduction to normal random variables.

Let $\l>0$ be a parameter whose value we will determine later.
By Chebyshev's inequality, we have
\begin{equation}				\label{eq: Chebyshev}
p_2 = \Pr{S > t/2} = \Pr{\l S > \l t/2} \le \exp(-\l t/2) \E \exp(\l S).
\end{equation}
Consider independent Bernoulli random variables $\d_i \in \{0,1\}$ with $\E \d_i=1/2$.
Since $\E \d_i (1-\d_j)$ equals $1/4$ for $i \ne j$ and $0$ for $i=j$, we have
$$
S = 4 \E_\d S_\d, \quad \text{where} \quad S_\d = \sum_{i,j} \d_i (1-\d_j) a_{ij} X_i X_j.
$$
Here $\E_\d$ denotes the expectation with respect to $\d=(\d_1,\ldots,\d_n)$.
Jensen's inequality yields
\begin{equation}				\label{eq: MGF S}
\E \exp(\l S) \le \E_{X,\d} \exp(4 \l S_\d)
\end{equation}
where $E_{X,\d}$ denotes expectation with respect to both $X$ and $\d$.
Consider the set of indices $\Lambda_\d = \{ i \in [n] :\, \d_i = 1 \}$ and express
$$
S_\d = \sum_{i \in \Lambda_\d, \, j \in \Lambda_\d^c} a_{ij} X_i X_j
= \sum_{j \in \Lambda_\d^c} X_j \Big( \sum_{i \in \Lambda_\d} a_{ij} X_i \Big).
$$

Now we condition on $\d$ and $(X_i)_{i \in \Lambda_\d}$.
Then $S_\d$ is a linear combination of mean-zero sub-gaussian
random variables $X_j$, $j \in \Lambda_\d^c$, with fixed coefficients.
It follows that the conditional distribution of $S_\d$ is sub-gaussian, and its sub-gaussian norm
is bounded by the $\ell_2$-norm of the coefficient vector (see e.g. in \cite[Lemma~5.9]{V RMT}).
Specifically,
$$
\|S_\d\|_{\psi_2} \le C \s_\d \quad \text{where} \quad
\s_\d^2 := \sum_{j \in \Lambda_\d^c} \Big( \sum_{i \in \Lambda_\d} a_{ij} X_i \Big)^2.
$$
Next, we use a standard estimate of the moment generating function of centered sub-gaussian random variables,
see \cite[Lemma~5.5]{V RMT}. It yields
$$
\E_{(X_j)_{j \in \Lambda_\d^c}} \exp(4 \l S_\d) \le \exp(C \l^2 \|S_\d\|_{\psi_2}^2) \le \exp(C' \l^2 \s_\d^2).
$$
Taking expectations of both sides with respect to $(X_i)_{i \in \Lambda_\d}$, we obtain
\begin{equation}				\label{eq: E delta}
\E_X \exp(4 \l S_\d) \le \E_X \exp(C' \l^2 \s_\d^2) =: E_\d.
\end{equation}
Recall that this estimate holds for every fixed $\d$.
It remains to estimate $E_\d$.

\medskip
{\bf Step 3: reduction to normal random variables.}
Consider $g = (g_1,\ldots, g_n)$ where $g_i$ are independent $N(0,1)$ random variables.
The rotation invariance of normal distribution implies that for each fixed $\d$ and $X$,
we have
$$
Z := \sum_{j \in \Lambda_\d^c} g_j \Big( \sum_{i \in \Lambda_\d} a_{ij} X_i \Big)
\sim N(0,\s_\d^2).
$$
By the formula for the moment generating function of normal distribution,
we have $\E_g \exp(s Z) = \exp(s^2 \s_\d^2/2)$. Comparing this with the
formula defining $E_\d$ in \eqref{eq: E delta}, we find that the two expressions are somewhat similar.
Choosing $s^2=2C' \l^2$, we can match the two expressions as follows:
$$
E_\d
= \E_{X,g} \exp(C_1 \l Z)
$$
where $C_1 = \sqrt{2C'}$.

Rearranging the terms, we can write
$Z = \sum_{i \in \Lambda_\d} X_i \Big( \sum_{j \in \Lambda_\d^c} a_{ij} g_j \Big)$.
Then we can bound the moment generating function of $Z$ in the same
way we bounded the moment generating function of $S_\d$ in Step~2,
only now relying on the sub-gaussian properties of $X_i$, $i \in \Lambda_\d$.
We obtain
$$
E_\d
\le \E_g \exp \Big[ C_2 \l^2 \sum_{i \in \Lambda_\d} \Big( \sum_{j \in \Lambda_\d^c} a_{ij} g_j \Big)^2 \Big].
$$
To express this more compactly, let $P_\d$ denotes the coordinate projection
(restriction) of $\R^n$ onto $\R^{\Lambda_\d}$, and define the
matrix $A_\d = P_\d A (I-P_\d)$. Then what we obtained
$$
E_\d \le \E_g \exp \Big( C_2 \l^2 \|A_\d g\|_2^2 \Big).
$$
Recall that this bound holds for each fixed $\d$.
We have removed the original random variables $X_i$ from the problem,
so it now becomes a problem about normal random variables $g_i$.

\medskip
{\bf Step 4: calculation for normal random variables.}
By the rotation invariance of the distribution of $g$, the random variable $\|A_\d g\|_2^2$
is distributed identically with $\sum_i s_i^2 g_i^2$ where $s_i$ denote the singular values of $A_\d$.
Hence by independence,
$$
E_\d = \E_g \exp \Big( C_2 \l^2 \sum_i s_i^2 g_i^2 \Big)
= \prod_i \E_g \exp \big( C_2 \l^2 s_i^2 g_i^2 \big).
$$
Note that each $g_i^2$ has the chi-squared distribution with one degree of freedom,
whose moment generating function is $\E \exp(t g_i^2) = (1-2t)^{-1/2}$
for $t<1/2$. Therefore
$$
E_\d \le \prod_i \big( 1- 2 C_2 \l^2 s_i^2 \big)^{-1/2}
\quad \text{provided } \quad
\max_i C_2 \l^2 s_i^2 < 1/2.
$$
Using the numeric inequality $(1-z)^{-1/2} \le e^z$ which is valid for all $0 \le z \le 1/2$,
we can simplify this as follows:
$$
E_\d \le \prod_i \exp(C_3 \l^2 s_i^2)
= \exp \Big( C_3 \l^2 \sum_i s_i^2 \Big)
\quad \text{provided } \quad
\max_i C_3 \l^2 s_i^2 < 1/2.
$$
Since $\max_i s_i = \|A_\d\| \le \|A\|$ and $\sum_i s_i^2 = \|A_\d\|_\HS^2 \le \|A\|_\HS$,
we have proved the following:
$$
E_\d \le \exp \big( C_3 \l^2 \|A\|_\HS^2 \big) \quad \text{for } \l \le c_0 / \|A\|.
$$
This is a uniform bound for all $\d$. Now we take expectation with respect to $\d$.
Recalling \eqref{eq: MGF S} and \eqref{eq: E delta}, we obtain the following estimate
on the moment generating function of $S$:
$$
\E \exp(\l S)
\le \E_\d E_\d
\le \exp \big( C_3 \l^2 \|A\|_\HS^2 \big) \quad \text{for } \l \le c_0 / \|A\|.
$$

\medskip
{\bf Step 5: conclusion.}
Putting this estimate into the exponential Chebyshev's inequality \eqref{eq: Chebyshev},
we obtain
$$
p_2 \le \exp \big( -\l t/ 2 + C_3 \l^2 \|A\|_\HS^2 \big) \quad \text{for } \l \le c_0 / \|A\|.
$$
Optimizing over $\l$, we conclude that
$$
p_2 \le \exp \Big[ - c \min \Big( \frac{t^2}{\|A\|_\HS^2}, \frac{t}{\|A\|} \Big) \Big] =: p(A, t).
$$
Now we combine with a similar estimate \eqref{eq: pone} for $p_1$ and obtain
$$
p = p_1 + p_2 \le 2 p(A,t).
$$
Repeating the argument for $-A$ instead of $A$, we get
$\Pr{X^\tran A X - \E X^\tran A X < - t} \le 2 p(A,t)$. Combining the two events, we obtain
$\Pr{|X^\tran A X - \E X^\tran A X| > t} \le 4 p(A,t)$.
Finally, one can reduce the factor $4$ to $2$ by adjusting the constant $c$ in $p(A,t)$.
The proof is complete.
\end{proof}

\section{Sub-gaussian concentration}					\label{s: subgauss}
%-----------------

Hanson-Wright inequality has a useful consequence, a
concentration inequality for random vectors with independent
sub-gaussian coordinates.

\begin{theorem}[Sub-gaussian concentration]				\label{thm: conc}
  Let $A$ be a fixed $m \times n$ matrix.
  Consider a random vector $X = (X_1,\ldots,X_n)$ where $X_i$ are
  independent random variables satisfying
  $\E X_i = 0$, $\E X_i^2 = 1$ and $\|X_i\|_{\psi_2} \le K$.
  Then for any $t \ge 0$, we have
  $$
  \Pr{ \big| \|AX\|_2 - \|A\|_\HS \big| > t }
  \le 2 \exp \Big( - \frac{c t^2}{K^4 \|A\|^2} \Big).
  $$
\end{theorem}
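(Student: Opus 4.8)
The plan is to derive this concentration inequality as a corollary of the Hanson–Wright inequality (Theorem~\ref{thm: HW}) applied to the positive-semidefinite matrix $A^\tran A$. The key observation is that $\|AX\|_2^2 = X^\tran A^\tran A X$ is exactly a quadratic form of the type controlled by Hanson–Wright, and that its expectation is $\E X^\tran A^\tran A X = \tr(A^\tran A) = \|A\|_\HS^2$, since the $X_i$ are independent with $\E X_i = 0$ and $\E X_i^2 = 1$. So Hanson–Wright gives control on $\|AX\|_2^2 - \|A\|_\HS^2$, and the task is to convert a deviation bound for the squared norm into one for the norm itself.

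First I would apply Theorem~\ref{thm: HW} to the matrix $B = A^\tran A$, noting that $\|B\| = \|A\|^2$ and $\|B\|_\HS \le \|A\|\,\|A\|_\HS$ (since $\|A^\tran A\|_\HS^2 = \sum_i s_i^4 \le (\max_i s_i^2)\sum_i s_i^2 = \|A\|^2 \|A\|_\HS^2$, where $s_i$ are the singular values of $A$). This yields, for every $u \ge 0$,
\begin{equation*}
\Pr{ \big| \|AX\|_2^2 - \|A\|_\HS^2 \big| > u }
\le 2 \exp \Big[ - c \min \Big( \frac{u^2}{K^4 \|A\|^2 \|A\|_\HS^2}, \frac{u}{K^2 \|A\|^2} \Big) \Big].
\end{equation*}

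The main step is the linearization. I would use the elementary inequality that if $\big| z^2 - a^2 \big| > \max(t, t^2)$ (with $a = \|A\|_\HS \ge 0$ and $z = \|AX\|_2 \ge 0$) then $|z - a| > t$; more precisely, the event $\{|z - a| > t\}$ implies $|z^2 - a^2| = |z-a|\,|z+a| > t \cdot \max(t, a)$, since $z + a \ge |z - a| > t$ and also $z + a \ge a$. Setting $u = t\,\max(t, \|A\|_\HS)$ and substituting into the squared-norm bound, one checks that in both regimes the exponent collapses to the single clean expression: when $u = t^2$ (the case $t \ge \|A\|_\HS$) the linear term $u/(K^2\|A\|^2) = t^2/(K^2\|A\|^2)$ dominates, and when $u = t\|A\|_\HS$ (the case $t \le \|A\|_\HS$) the quadratic term $u^2/(K^4\|A\|^2\|A\|_\HS^2) = t^2/(K^4\|A\|^2)$ is the relevant one. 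Since $K \ge c$ may be assumed (as $K \gtrsim \|X_i\|_{\psi_2} \ge \E X_i^2 = 1$ up to constants), both regimes are bounded by $\exp(-ct^2/(K^4\|A\|^2))$, giving the claimed inequality.

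\emph{The hard part} is making the two-regime bookkeeping in the linearization clean rather than case-ridden: one must verify that the choice $u = t\max(t,\|A\|_\HS)$ simultaneously handles the crossover between the sub-gaussian ($u^2$) and sub-exponential ($u$) tails of Hanson–Wright so that the final exponent is the single Gaussian-type term $t^2/(K^4\|A\|^2)$ with no residual $\|A\|_\HS$ dependence. The observation that $K \gtrsim 1$ is what lets us absorb the $\|A\|$ versus $\|A\|^2$ and $K^2$ versus $K^4$ discrepancies uniformly into one constant, which is the small technical point that makes the statement come out with the advertised form.
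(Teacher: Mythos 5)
Your proposal is correct and takes essentially the same route as the paper: apply Hanson--Wright to $A^\tran A$, use $\|A^\tran A\|_\HS \le \|A\|\,\|A\|_\HS$ and $K \gtrsim 1$ (from unit variance) to merge the constants, and linearize via the observation that $|z-a|>t$ forces $|z^2-a^2| > t\max(t,a)$, which is the paper's inequality $\max(|z-1|,|z-1|^2)\le|z^2-1|$ in unnormalized form. The two-regime bookkeeping you carry out checks out exactly as claimed.
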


\begin{remark}
  The consequence of Theorem~\ref{thm: conc} can be alternatively
  formulated as follows: the random variable $Z = \|AX\|_2 - \|A\|_\HS$ is sub-gaussian,
  and $\|Z\|_{\psi_2} \le C K^2 \|A\|$.
\end{remark}

\begin{remark}
  A few special cases of Theorem~\ref{thm: conc} can be easily deduced from
  classical concentration inequalities.
  For Gaussian random variables $X_i$, this result is a standard
  consequence of Gaussian concentration, see e.g. \cite{Ledoux concentration}.
  For bounded random variables $X_i$, it can be deduced in a similar way
  from Talagrand's concentration for convex Lipschitz functions \cite{Tal concentration},
  see \cite[Theorem~2.1.13]{Tao RMT}.
  For more general random variables, one can find versions of Theorem~\ref{thm: conc}
  with varying degrees of generality scattered in the literature
  (e.g. the appendix of \cite{EYY}).
  However, we were unable to find Theorem~\ref{thm: conc} in the existing literature.
\end{remark}

\begin{proof}
Let us apply Hanson-Wright inequality, Theorem~\ref{thm: HW},
for the matrix $Q = A^\tran A$.
Since $X^\tran Q X = \|AX\|_2^2$, we have
$\E X^\tran Q X = \|A\|_\HS^2$. Also, note that since all $X_i$ have unit variance,
we have $K \ge 2^{-1/2}$. Thus we obtain for any $u \ge 0$ that
$$
\Pr{ \big| \|AX\|_2^2 - \|A\|_\HS^2 \big| > u }
\le 2 \exp \Big[ - \frac{C}{K^4} \min \Big( \frac{u}{\|A\|^2}, \frac{u^2}{\|A^\tran A\|_\HS^2} \Big) \Big].
$$
Let $\e \ge 0$ be arbitrary, and let us use this estimate for $u = \e \|A\|_\HS^2$.
Since $\|A^\tran A\|_\HS^2 \le \|A^\tran\|^2 \|A\|_\HS^2 = \|A\|^2 \|A\|_\HS^2$,
it follows that
\begin{equation}				\label{eq: AX squared}
\Pr{ \big| \|AX\|_2^2 - \|A\|_\HS^2 \big| > \e \|A\|_\HS^2 }
\le 2 \exp \Big[ - c \min(\e,\e^2) \, \frac{\|A\|_\HS^2}{K^4 \|A\|^2} \Big].
\end{equation}
Now let $\d \ge 0$ be arbitrary; we shall use this inequality for $\e = \max(\d,\d^2)$.
Observe that the (likely) event $\big| \|AX\|_2^2 - \|A\|_\HS^2 \big| \le \e \|A\|_\HS^2$
implies the event $\big| \|AX\|_2 - \|A\|_\HS \big| \le \d \|A\|_\HS$.
This can be seen by dividing both sides of the inequalities by $\|A\|_\HS^2$ and
$\|A\|_\HS$ respectively, and using the numeric bound
$\max(|z-1|, |z-1|^2) \le |z^2-1|$, which is valid for all $z \ge 0$.
Using this observation along with the identity $\min(\e,\e^2)=\d^2$,
we deduce from \eqref{eq: AX squared} that
$$
\Pr{ \big| \|AX\|_2 - \|A\|_\HS \big| > \d \|A\|_\HS }
\le 2 \exp \Big( - c \d^2 \, \frac{\|A\|_\HS^2}{K^4 \|A\|^2} \Big).
$$
Setting $\d = t/\|A\|_\HS$, we obtain the desired inequality.
\end{proof}

\subsection{Small ball probabilities}
%...................

Using a standard symmetrization argument, we can deduce
from Theorem~\ref{thm: conc} some bounds on small ball probabilities.
The following result is due to Latala et al. \cite[Theorem~2.5]{LMOT}.

\begin{corollary}[Small ball probabilities]		\label{cor: SBP}
  Let $A$ be a fixed $m \times n$ matrix.
  Consider a random vector $X = (X_1,\ldots,X_n)$ where $X_i$ are
  independent random variables satisfying
  $\E X_i = 0$, $\E X_i^2 = 1$ and $\|X_i\|_{\psi_2} \le K$.
  Then for every $y \in \R^m$ we have
  $$
  \Pr{ \|AX-y\|_2 < \frac{1}{2} \|A\|_\HS }
  \le 2 \exp \Big( - \frac{c \|A\|_\HS^2}{K^4 \|A\|^2} \Big).
  $$
\end{corollary}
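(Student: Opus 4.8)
The plan is to deduce the small-ball bound from the two-sided concentration of Theorem~\ref{thm: conc} by a symmetrization argument that removes the dependence on the shift $y$. First I would introduce an independent copy $X'$ of $X$ and write $E = \{\|AX - y\|_2 < \tfrac12 \|A\|_\HS\}$ together with the analogous event $E'$ for $X'$. Since $E$ and $E'$ depend on the independent vectors $X$ and $X'$ respectively, and $X'$ has the same distribution as $X$, we have $\Pr{E}^2 = \Pr{E}\Pr{E'} = \Pr{E \cap E'}$. The point of the symmetrization is that on $E \cap E'$ the triangle inequality gives
$$
\|A(X-X')\|_2 \le \|AX - y\|_2 + \|AX' - y\|_2 < \|A\|_\HS,
$$
in which the arbitrary vector $y$ has cancelled. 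Hence $\Pr{E}^2 \le \Pr{\|A(X-X')\|_2 < \|A\|_\HS}$, and it remains only to bound the right-hand side.

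For the second step I would apply Theorem~\ref{thm: conc} to the symmetrized vector. Setting $Y = (X-X')/\sqrt2$, its coordinates $Y_i = (X_i - X_i')/\sqrt2$ are independent, have $\E Y_i = 0$ and $\E Y_i^2 = 1$, and satisfy $\|Y_i\|_{\psi_2} \le (\|X_i\|_{\psi_2} + \|X_i'\|_{\psi_2})/\sqrt2 \le \sqrt2\,K$. Thus $Y$ meets the hypotheses of Theorem~\ref{thm: conc} with sub-gaussian parameter $\sqrt2\,K$. The event $\{\|A(X-X')\|_2 < \|A\|_\HS\}$ is exactly $\{\|AY\|_2 < \|A\|_\HS/\sqrt2\}$, which forces the deviation $\big|\,\|AY\|_2 - \|A\|_\HS\,\big| > (1 - 1/\sqrt2)\|A\|_\HS$. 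Applying the theorem with $t = (1-1/\sqrt2)\|A\|_\HS$ then yields a bound of the shape $2\exp\!\big(-c'\|A\|_\HS^2/(K^4\|A\|^2)\big)$, where the constant absorbs both the factor $(1-1/\sqrt2)^2$ and the factor $(\sqrt2)^4 = 4$ coming from the rescaled sub-gaussian norm.

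Combining the two steps gives $\Pr{E}^2 \le 2\exp\!\big(-c'\|A\|_\HS^2/(K^4\|A\|^2)\big)$; taking square roots and readjusting the absolute constant produces the claimed estimate with leading factor $2$. I do not expect a genuine obstacle here, since the computation is routine once the structure is fixed; the one conceptual step worth isolating is the symmetrization, which is precisely what makes the bound uniform over all $y$. The concentration inequality of Theorem~\ref{thm: conc} centers $\|AX\|_2$ rigidly at $\|A\|_\HS$ and permits no shift, whereas passing to $X - X'$ pins the center of the distribution at zero regardless of $y$. The only bookkeeping to watch is the variance normalization of the difference, which is the source of the $\sqrt2$ rescaling, and the tracking of the absolute constants through the final square-root step.
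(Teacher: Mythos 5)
Your proposal is correct and follows essentially the same argument as the paper: symmetrization with an independent copy $X'$ to cancel the shift $y$, followed by an application of Theorem~\ref{thm: conc} to $\frac{1}{\sqrt{2}}(X-X')$ with $t=(1-1/\sqrt{2})\|A\|_\HS$, and a final square root with adjustment of constants. All the details you flag (the $\sqrt{2}$ normalization, the doubled sub-gaussian norm, the constant bookkeeping) match the paper's treatment.
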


\begin{remark}
  Informally, Corollary~\ref{cor: SBP} states that
  the small ball probability decays exponentially in the stable
  rank $r(A) = \|A\|_\HS^2 / \|A\|^2$.
\end{remark}

\begin{proof}
Let $X'$ denote an independent copy of the random vector $X$. Denote
$p = \Pr{ \|AX-y\|_2 < \frac{1}{2} \|A\|_\HS }$. Using independence and triangle inequality,
we have
\begin{align}
p^2
&= \Pr{ \|AX-y\|_2 < \frac{1}{2} \|A\|_\HS, \; \|AX'-y\|_2 < \frac{1}{2} \|A\|_\HS }  \nonumber\\
&\le \Pr{ \|A(X-X')\|_2 < \|A\|_\HS }.				\label{eq: symmetrized}
\end{align}
The components of the random vector $X-X'$ have mean zero,
variances bounded below by $2$ and sub-gaussian norms bounded above by $2K$.
Thus we can apply Theorem~\ref{thm: conc} for $\frac{1}{\sqrt{2}}(X-X')$
and conclude that
$$
\Pr{ \|A(X-X')\|_2 < \sqrt{2} (\|A\|_\HS - t) }
\le 2 \exp \Big( - \frac{c t^2}{K^4 \|A\|^2} \Big), \quad t \ge 0.
$$
Using this with $t=(1 - 1/\sqrt{2}) \|A\|_\HS$, we obtain the desired
bound for \eqref{eq: symmetrized}.
\end{proof}

The following consequence of Corollary~\ref{cor: SBP} is even more informative.
It states that $\|AX-y\|_2 \gtrsim \|A\|_\HS + \|y\|_2$ with high probability.

\begin{corollary}[Small ball probabilities, improved]		\label{cor: SBP improved}
  Let $A$ be a fixed $m \times n$ matrix.
  Consider a random vector $X = (X_1,\ldots,X_n)$ where $X_i$ are
  independent random variables satisfying
  $\E X_i = 0$, $\E X_i^2 = 1$ and $\|X_i\|_{\psi_2} \le K$.
  Then for every $y \in \R^m$ we have
  $$
  \Pr{ \|AX-y\|_2 < \frac{1}{6} (\|A\|_\HS + \|y\|_2) }
  \le 2 \exp \Big( - \frac{c \|A\|_\HS^2}{K^4 \|A\|^2} \Big).
  $$
\end{corollary}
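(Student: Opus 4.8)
The plan is to split the analysis into two regimes according to whether $\|y\|_2$ is small or large compared with $\|A\|_\HS$, invoking the already-established Corollary~\ref{cor: SBP} in the first regime and the upper tail of Theorem~\ref{thm: conc} in the second. I will fix the threshold at $\|y\|_2 = 2\|A\|_\HS$.

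First suppose $\|y\|_2 \le 2\|A\|_\HS$. Then $\frac{1}{6}(\|A\|_\HS + \|y\|_2) \le \frac{1}{2}\|A\|_\HS$, so the event $\big\{\|AX-y\|_2 < \frac{1}{6}(\|A\|_\HS+\|y\|_2)\big\}$ is contained in the event $\big\{\|AX-y\|_2 < \frac{1}{2}\|A\|_\HS\big\}$. Corollary~\ref{cor: SBP} bounds the probability of the latter by exactly the desired expression, so this regime is immediate.

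Next suppose $\|y\|_2 > 2\|A\|_\HS$; here the bound of Corollary~\ref{cor: SBP} is insufficient because $\frac{1}{6}(\|A\|_\HS+\|y\|_2)$ may exceed $\frac{1}{2}\|A\|_\HS$. Instead I would exploit the fact that $\|AX\|_2$ concentrates near $\|A\|_\HS$, so that $AX$ is very unlikely to reach the far-away point $y$. By the triangle inequality, $\|AX-y\|_2 \ge \|y\|_2 - \|AX\|_2$. Applying Theorem~\ref{thm: conc} with $t = \frac{1}{2}\|A\|_\HS$ shows that, outside an event of probability at most $2\exp\big(-c\|A\|_\HS^2/(4K^4\|A\|^2)\big)$, one has $\|AX\|_2 \le \frac{3}{2}\|A\|_\HS$, whence $\|AX-y\|_2 \ge \|y\|_2 - \frac{3}{2}\|A\|_\HS$. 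A short computation using $\|y\|_2 > 2\|A\|_\HS$ gives $\|y\|_2 - \frac{3}{2}\|A\|_\HS \ge \frac{1}{6}(\|A\|_\HS+\|y\|_2)$, i.e. on this likely event the event whose probability we wish to bound does not occur. Hence its probability is at most $2\exp\big(-c\|A\|_\HS^2/(4K^4\|A\|^2)\big)$, which has the required form after absorbing the factor $4$ into the absolute constant $c$.

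The only real obstacle is the bookkeeping with constants: one must choose the threshold ($2\|A\|_\HS$) and the deviation level in Theorem~\ref{thm: conc} ($t = \frac{1}{2}\|A\|_\HS$) compatibly, so that both the containment used in the first regime and the numeric inequality used in the second hold with the same constant $\frac{1}{6}$. There is a little slack in both, so the value $\frac{1}{6}$ is in no way sharp; any sufficiently small absolute constant would work, at the cost of a different constant in the exponent. Combining the two regimes then yields the stated bound.
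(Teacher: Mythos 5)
Your proof is correct and uses the same two ingredients as the paper's own argument: the upper deviation bound on $\|AX\|_2$ from Theorem~\ref{thm: conc}, the small ball bound of Corollary~\ref{cor: SBP}, and the triangle inequality $\|AX-y\|_2 \ge \|y\|_2 - \|AX\|_2$. The only cosmetic difference is that you split into cases according to whether $\|y\|_2 \le 2\|A\|_\HS$, whereas the paper intersects both likely events and observes $\max\bigl(\tfrac{1}{2}h,\ \|y\|_2 - \tfrac{3}{2}h\bigr) \ge \tfrac{1}{6}(h+\|y\|_2)$ --- these are the same calculation.
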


\begin{proof}
Denote $h := \|A\|_\HS$.
Combining the conclusions of Theorem~\ref{thm: conc}
and Corollary~\ref{cor: SBP}, we obtain that with probability at least $1 - 4 \exp(-ch^2/K^4 \|A\|^2)$,
the following two estimates hold simultaneously:
\begin{equation}				\label{eq: AX AX-y}
\|AX\|_2 \le \frac{3}{2} h \quad \text{and} \quad \|AX-y\|_2 \ge \frac{1}{2} h.
\end{equation}
Suppose this event occurs. Then by triangle inequality,
$\|AX-y\|_2 \ge \|y\|_2 - \|AX\|_2 \ge \|y\|_2 - \frac{3}{2} h$.
Combining this with the second inequality in \eqref{eq: AX AX-y}, we obtain that
$$
\|AX-y\|_2 \ge \max \Big( \frac{1}{2} h, \|y\|_2 - \frac{3}{2} h \Big)
\ge \frac{1}{6}(h + \|y\|_2).
$$
The proof is complete.
\end{proof}

\section{Two applications}					\label{s: applications}
%-------------------

Concentration results like Theorem~\ref{thm: conc} have many useful consequences.
We include two applications in this article; the reader will certainly find more.

The first application is a concentration of distance
from a random vector to a fixed subspace.
For random vectors with bounded components,
one can find a similar result in
\cite[Corollary~2.1.19]{Tao RMT}, where it was deduced
from Talagrand's concentration inequality.

\begin{corollary}[Distance between a random vector and a subspace]
  Let $E$ be a subspace of $\R^n$ of dimension $d$.
  Consider a random vector $X = (X_1,\ldots,X_n)$ where $X_i$ are
  independent random variables satisfying
  $\E X_i = 0$, $\E X_i^2 = 1$ and $\|X_i\|_{\psi_2} \le K$.
  Then for any $t \ge 0$, we have
  $$
  \Pr{ \big| d(X,E) - \sqrt{n-d} \big| > t }
  \le 2 \exp(-c t^2 / K^4).
  $$
\end{corollary}

\begin{proof}
The conclusion follows from Theorem~\ref{thm: conc} for $A=P_{E^\perp}$,
the orthogonal projection onto $E$. Indeed,
$d(X,E) = \|P_{E^\perp} X\|_2$, $\|P_{E^\perp}\|_\HS = \dim(E^\perp) = \sqrt{n-d}$
and $\|P_{E^\perp}\| = 1$.
\end{proof}

\medskip

Our second application of Theorem~\ref{thm: conc} is for operator norms of random matrices.
The result essentially states that an $m \times n$ matrix $BG$ obtained as
a product of a deterministic matrix $B$ and a random matrix $G$ with
independent sub-gaussian entries satisfies
$$
\|BG\| \lesssim \|B\|_\HS + \sqrt{n} \|B\|
$$
with high probability.
For random matrices with heavy-tailed rather than sub-gaussian
components, this problem was studied in \cite{V products}.

\begin{theorem}[Norms of random matrices]			\label{thm: product norm}
  Let $B$ be a fixed $m \times N$ matrix, and let $G$ be an $N \times n$
  random matrix with independent entries that satisfy
  $\E G_{ij} = 0$, $\E G_{ij}^2 = 1$ and $\|G_{ij}\|_{\psi_2} \le K$.
  Then for any $s, t \ge 1$ we have
  $$
  \Pr{ \|BG\| > C K^2 (s \|B\|_\HS + t \sqrt{n} \|B\|) } \le 2 \exp(- s^2 r - t^2 n).
  $$
  Here $r = \|B\|_\HS^2 / \|B\|^2$ is the stable rank of $B$.
\end{theorem}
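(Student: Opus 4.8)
The plan is to bound $\|BG\|$ by combining the sub-gaussian concentration of Theorem~\ref{thm: conc} with a standard $\varepsilon$-net argument over the sphere. Recall that $\|BG\| = \max_{u \in S^{n-1}} \|BGu\|_2$. For a fixed unit vector $u$, the vector $Gu$ has independent sub-gaussian coordinates (each coordinate $\sum_k G_{ik} u_k$ is a sub-gaussian sum with $\ell_2$-norm of coefficients equal to $\|u\|_2 = 1$), so the product $BGu$ is exactly of the form treated by Theorem~\ref{thm: conc}. The first step would therefore be to fix $u \in S^{n-1}$, observe that $Gu$ satisfies the hypotheses of Theorem~\ref{thm: conc} with sub-gaussian norm $\lesssim K$ and unit variances, and conclude that
\begin{equation*}
\Pr{ \|BGu\|_2 > \|B\|_\HS + w } \le 2 \exp \big( - c w^2 / K^4 \|B\|^2 \big)
\end{equation*}
for every $w \ge 0$. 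Choosing $w = c'K^2(s\|B\|_\HS + t\sqrt{n}\,\|B\|)$ gives a pointwise tail bound on the sphere whose exponent is $\gtrsim s^2 r + t^2 n$, matching the two terms in the target.

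The second step is to upgrade this pointwise estimate to a uniform one over all $u \in S^{n-1}$. I would take a $\tfrac14$-net $\NN$ of $S^{n-1}$; a standard volumetric bound gives $|\NN| \le 9^n$, and the approximation lemma yields $\|BG\| \le 2 \max_{u \in \NN} \|BGu\|_2$. A union bound over $\NN$ then costs a factor of $9^n = \exp(n \log 9)$ in probability. The point of keeping the explicit $\sqrt{n}\,\|B\|$ term with a tunable constant is precisely to absorb this entropy: the $t^2 n$ contribution to the exponent, for $t \ge 1$ and a large enough absolute constant $C$, dominates $n\log 9$, so the union bound survives and still leaves an exponent of order $s^2 r + t^2 n$. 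This is the step I expect to require the most care, since one must verify that for $s,t \ge 1$ the exponent after the union bound is still bounded below by a constant multiple of $s^2 r + t^2 n$ (not merely positive), which forces the constant inside the probability and the net constant to be chosen compatibly.

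Finally I would collect the pieces: by the net approximation, $\|BG\| > CK^2(s\|B\|_\HS + t\sqrt{n}\,\|B\|)$ implies $\max_{u \in \NN}\|BGu\|_2$ exceeds half that threshold, and the union bound over the $9^n$ net points together with the single-vector tail gives the claimed probability after relabeling constants. The only genuinely delicate bookkeeping is balancing the two scales $r$ and $n$ in the exponent against the net cardinality; everything else is a routine application of Theorem~\ref{thm: conc} and the covering argument. I would remark that the appearance of the stable rank $r$ in the first exponent is natural, since it is exactly the quantity governing the concentration of $\|BGu\|_2$ around $\|B\|_\HS$ for an individual direction, while the dimension $n$ enters only through the covering of the sphere.
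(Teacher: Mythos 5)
Your proposal is correct and follows essentially the same route as the paper: a pointwise application of Theorem~\ref{thm: conc} for a fixed unit vector, followed by a net and union bound over the sphere, with the $t^2 n$ term absorbing the entropy of the net. The only (immaterial) difference is that you apply Theorem~\ref{thm: conc} to the random vector $Gu \in \R^N$ with the matrix $B$ (picking up an absolute constant in the sub-gaussian norm of the coordinates of $Gu$), whereas the paper applies it to $G$ viewed as a vector in $\R^{Nn}$ with the linear operator $G \mapsto BGu$; both yield the same single-vector tail bound up to absolute constants.
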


\begin{proof}
We need to bound $\|BGx\|_2$ uniformly for all $x \in S^{n-1}$.
Let us first fix $x \in S^{n-1}$.
By concatenating the rows of $G$, we can view $G$ as a long vector in $\R^{Nn}$.
Consider the linear operator $T : \ell_2^{Nn} \to \ell_2^m$ defined as
$T(G) = BGx$, and let us apply Theorem~\ref{thm: conc} for $T(G)$.
To this end, it is not difficult to see that the the Hilbert-Schmidt norm of $T$ equals $\|B\|_\HS$
and the operator norm of $T$ is at most $\|B\|$.
(The latter follows from $\|BGx\| \le \|B\| \|G\| \|x\|_2 \le \|B\| \|G\|_\HS$,
and from the fact the $\|G\|_\HS$ is the Euclidean norm of $G$ as a vector in $\ell_2^{Nn}$).
Then for any $u \ge 0$, we have
$$
\Pr{ \|BGx\|_2 > \|B\|_\HS + u } \le 2 \exp \Big( - \frac{c u^2}{K^4 \|B\|^2} \Big).
$$

The last part of the proof is a standard covering argument. Let $\NN$ be an $1/2$-net of
$S^{n-1}$ in the Euclidean metric. We can choose this net so that $|\NN| \le 5^n$,
see \cite[Lemma~5.2]{V RMT}. By a union bound, with probability at least
\begin{equation}				\label{eq: union bound prob}
5^n \cdot 2 \exp \Big( - \frac{c u^2}{K^4 \|B\|^2} \Big),
\end{equation}
every $x \in \NN$ satisfies $\|BGx\|_2 \le \|B\|_\HS + u$.
On this event, the approximation lemma (see \cite[Lemma~5.2]{V RMT}) implies that
every $x \in S^{n-1}$ satisfies $\|BGx\|_2 \le 2(\|B\|_\HS + u)$.
It remains to choose $u = C K^2 (s\|B\|_\HS + t \sqrt{n} \|B\|)$ with sufficiently large
absolutely constant $C$ in order to make the probability bound \eqref{eq: union bound prob}
smaller than $2 \exp(- s^2 r - t^2 n)$. This completes the proof.
\end{proof}

\begin{remark}					\label{rem: product norm special cases}
A couple of special cases in Theorem~\ref{thm: product norm} are worth mentioning.
If $B=P$ is a projection in $\R^N$ of rank $r$ then
$$
\Pr{ \|PG\| > C K^2(s \sqrt{r} + t \sqrt{n}) } \le 2 \exp(- s^2 r - t^2 n).
$$
The same holds if $B=P$ is an $r \times N$ matrix such that $P P^\tran = I_r$.

In particular, if $B = I_N$ we obtain
$$
\Pr{ \|G\| > C K^2 (s \sqrt{N} + t \sqrt{n}) } \le 2 \exp(- s^2 N - t^2 n).
$$
\end{remark}

\subsection{Complexification}
%................
  We formulated the results in Sections~\ref{s: subgauss} and \ref{s: applications}
  for real matrices and real valued random variables.
  Using a standard complexification trick, one can easily obtain complex versions of these results.
  Let us show how to complexify Theorem~\ref{thm: conc};
  the other applications follow from it.

  Suppose $A$ is a complex matrix while $X$ is a real-valued random vector as before.
  Then we can apply Theorem~\ref{thm: conc} for the real
  $2m \times n$ matrix $\tilde{A} := \begin{bmatrix} \re A \\ \im A \end{bmatrix}$.
  Note that $\|\tilde{A}X\|_2 = \|AX\|_2$, $\|\tilde{A}\| = \|A\|$ and $\|\tilde{A}\|_\HS = \|A\|_\HS$.
  Then the conclusion of Theorem~\ref{thm: conc} follows for $A$.

  Suppose now that both $A$ and $X$ are complex.
  Let us assume that the components $X_i$
  have independent real and imaginary parts, such that
  $$
  \re X_i = 0, \quad \E(\re X_i)^2 = \frac{1}{2}, \quad \|\re X_i\|_{\psi_2} \le K,
  $$
  and similarly for $\im X_i$.
  Then we can apply Theorem~\ref{thm: conc} for the real
  $2m \times 2n$ matrix $A' := \begin{bmatrix} \re A & -\im A \\ \im A & \re A \end{bmatrix}$
  and vector $X' = \sqrt{2} \, (\re X \; \im X) \in \R^{2n}$.
  Note that $\|A'X'\|_2 = \sqrt{2} \|AX\|_2$, $\|A'\| = \|A\|$ and $\|A'\|_\HS = \sqrt{2}\|A\|_\HS$.
  Then the conclusion of Theorem~\ref{thm: conc} follows for $A$.

\end{document}